\theoremstyle{plain}
\newtheorem{theorem}{Theorem}[section]
\newtheorem{lemma}[theorem]{Lemma}
\theoremstyle{definition}
\newtheorem{question}[theorem]{Question}
\newcommand{\Ball}{\mathsf{B}}
\newcommand{\di}{\mathsf{d}}
\newcommand{\cl}{\mathsf{cl}}
\newcommand{\weight}{\mathsf{w}}
\newcommand{\supi}{\mathsf{sup}}
\newcommand{\BB}{\mathcal{B}}
\newcommand{\RRR}{\mathbb{R}}
\begin{document}

\title{On the infinite powers of large zero-dimensional metrizable spaces}

\dedicatory{Dedicated to the memory of Gary Gruenhage}

\author{Andrea Medini}
\address{Institut f\"{u}r Diskrete Mathematik und Geometrie
\newline\indent Technische Universit\"{a}t Wien
\newline\indent  Wiedner Hauptstra\ss e 8–-10/104
\newline\indent 1040 Vienna, Austria}
\email{andrea.medini@tuwien.ac.at}
\urladdr{http://www.dmg.tuwien.ac.at/medini/}

\subjclass[2020]{54E35, 54B10, 54F45, 54F99}

\keywords{Zero-dimensional, metrizable, infinite power, strongly homogeneous, h-homogeneous, clopen set, partition, $\pi$-base.}

\thanks{This research was funded in whole by the Austrian Science Fund (FWF) DOI 10.55776/P35588. For open access purposes, the author has applied a CC BY public copyright license to any author-accepted manuscript version arising from this submission.}

\date{August 16, 2025}

\begin{abstract}
We show that $X^\lambda$ is strongly homogeneous whenever $X$ is a non-separable zero-dimensional metrizable space and $\lambda$ is an infinite cardinal. This partially answers a question of Terada, and improves a previous result of the author. Along the way, we show that every non-compact weight-homogeneous metrizable space with a $\pi$-base consisting of clopen sets can be partitioned into $\kappa$ many clopen sets, where $\kappa$ is the weight of $X$. This improves a result of van Engelen.
\end{abstract}

\maketitle

\tableofcontents

\section{Introduction}

By \emph{space} we always mean topological space. Recall that a space $X$ is \emph{homogeneous} if for every $(x,y)\in X\times X$ there exists a homeomorphism $h:X\longrightarrow X$ such that $h(x)=y$. This is a classical, well-studied notion (see the survey \cite{arhangelskii_van_mill}). Also recall that a space $X$ is \emph{strongly homogeneous} (or \emph{h-homogeneous}) if every non-empty clopen subspace of $X$ is homeomorphic to $X$. The modifier ``strongly'' is motivated by the well-known fact that every zero-dimensional first-countable strongly homogeneous space is homogeneous (see \cite[Proposition 3.32]{medini_thesis} for a picture-proof).

It is an interesting theme in general topology that taking infinite powers tends to improve the homogeneity properties of a space. The first instance of this phenomenon is of course the classical theorem of Keller \cite{keller} that $[0,1]^\omega$ is homogeneous. But the situation is particularly pleasant in the zero-dimensional realm, as Lawrence \cite{lawrence} showed that $X^\omega$ is homogeneous for every zero-dimensional separable metrizable space $X$ (answering the first part of Problem 387 from the book ``Open Problems in Topology,'' which is due to Fitzpatrick and Zhou \cite[Problem 4]{fitzpatrick_zhou}).

In fact, in the aptly named article \cite{new_classic}, Gruenhage asked whether $X^\omega$ is homogeneous for every zero-dimensional first-countable space $X$, and he obtained several partial answers in (unpublished) collaboration with Zhou (see the last paragraph of \cite{van_engelen}). Other related results were obtained by van Engelen \cite{van_engelen} and Medvedev \cite{medvedev_baire}. The answer was finally shown to be affirmative by Dow and Pearl \cite{dow_pearl}, who combined Lawrence's method with the technique of elementary submodels.

However, while the issue of homogeneity was resolved in the spectacular fashion described above, the following question \cite{terada} remains open (even for separable metrizable spaces).
\begin{question}[Terada]
Is $X^\omega$ strongly homogeneous for every zero-dimensional first-countable space $X$?	
\end{question}

Several partial answers to the above question are available (see \cite[Section 5]{medini_van_mill_zdomskyy} for a mini-survey). In particular, the author \cite[Corollary 29]{medini_products} proved that $X^\omega$ is strongly homogeneous for every strongly zero-dimensional non-separable metrizable space $X$. The aim of this article is to show that ``strongly zero-dimensional'' can be weakened to ``zero-dimensional'' (see Theorem \ref{theorem_main}).

We conclude this section by clarifying some terminology and notation. Our reference for general topology is \cite{engelking}, and our reference for set theory is \cite{jech}. A space is \emph{zero-dimensional} if it is non-empty, $\mathsf{T}_1$, and it has a base consisting of clopen sets. So a space $X$ is zero-dimensional iff $X$ is $\mathsf{T}_1$ and $\mathsf{ind}(X)=0$. It is easy to see that every zero-dimensional space is Tychonoff. A space $X$ is \emph{strongly zero-dimensional} if $X$ is a Tychonoff space and $\mathsf{dim}(X)=0$. By \cite[Theorem 6.2.6]{engelking}, every strongly zero-dimensional space is zero-dimensional. Recall that the \emph{weight} of a space $X$, which we will denote by $\weight(X)$, is the maximum between $\omega$ and the minimal cardinality of a base for $X$. Given a metric space $X$ with distance $\di$, a point $x\in X$ and a real number $\varepsilon >0$, we will denote by $\Ball(x,\varepsilon)=\{z\in X:\di(z,x)<\varepsilon\}$ the \emph{open ball} around $x$ of radius $\varepsilon$.

\section{Partitions into clopen sets}

The aim of this section is to show that every non-compact weight-homogeneous zero-dimensional metrizable space $X$ can be partitioned into $\weight(X)$ many clopen sets. This result was first obtained by van Engelen \cite[Lemma 2.1]{van_engelen} under the additional assumption that $X$ is strongly zero-dimensional.\footnote{\,At the very beginning of \cite[Section 2]{van_engelen}, van Engelen assumes that all spaces are metrizable and strongly zero-dimensional.}

In fact, the weaker assumption that $X$ has a $\pi$-base consisting of clopen sets will be sufficient (see Theorem \ref{theorem_partition}). We remark that this level of generality will not be needed in the proof of Theorem \ref{theorem_main}. However, this assumption has proven to be a useful one (see \cite{medini_products} and \cite{terada}), and the amount of extra work required is rather moderate. So we decided to state our results this way.

Given a metric space $X$ with distance $\di$ and a real number $\varepsilon >0$, recall that $D\subseteq X$ is \emph{$\varepsilon$-dispersed} if $\di(d,e)\geq\varepsilon$ whenever $d,e\in D$ and $d\neq e$. Given a space $X$, recall that $\BB$ is a \emph{$\pi$-base} for $X$ if $\BB$ consists of non-empty open subsets of $X$ and for every non-empty open subset $U$ of $X$ there exists $V\in\BB$ such that $V\subseteq U$.

\begin{lemma}\label{lemma_epsilon_dispersed}
Let $X$ be a metric space. Assume that $X$ has a $\pi$-base consisting of clopen sets. If $X$ has an infinite $\varepsilon$-dispersed subset $D$ for some $\varepsilon>0$ then $X$ can be partitioned into $|D|$ many clopen sets.
\end{lemma}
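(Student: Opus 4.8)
The plan is to produce $\kappa := |D|$ pairwise disjoint non-empty clopen sets whose union is all of $X$. First I would fix an injective enumeration $D = \{d_\alpha : \alpha < \kappa\}$ and record the elementary observation that the open balls $\Ball(d_\alpha, \varepsilon/4)$ form a \emph{discrete} family: given any $x \in X$, if $\Ball(x, \varepsilon/4)$ met both $\Ball(d_\alpha, \varepsilon/4)$ and $\Ball(d_\beta, \varepsilon/4)$, then a couple of applications of the triangle inequality would yield $\di(d_\alpha, d_\beta) < \varepsilon$, contradicting that $D$ is $\varepsilon$-dispersed. Next, using the hypothesis that $X$ has a $\pi$-base $\BB$ consisting of clopen sets, for each $\alpha < \kappa$ I would choose some $V_\alpha \in \BB$ with $V_\alpha \subseteq \Ball(d_\alpha, \varepsilon/4)$; such a $V_\alpha$ exists and is a non-empty clopen set because $\Ball(d_\alpha, \varepsilon/4)$ is a non-empty open set (it contains $d_\alpha$). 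Since $V_\alpha \subseteq \Ball(d_\alpha, \varepsilon/4)$, the family $\{V_\alpha : \alpha < \kappa\}$ inherits discreteness, and in particular its members are pairwise disjoint.

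The crux of the argument --- and the step I expect to be the main obstacle --- is to upgrade the \emph{open} set $U := \bigcup_{\alpha < \kappa} V_\alpha$ to a \emph{clopen} one; note that a naive choice of the $V_\alpha$ (or of the balls themselves) only guarantees that $U$ is open, while its complement is closed but possibly not open. This is exactly what the discreteness built in above is designed to fix: a discrete family of closed sets has closed union (if $x \notin U$, a basic neighborhood of $x$ meets at most one $V_\alpha$, and removing that single closed set yields a neighborhood of $x$ disjoint from $U$). Since each $V_\alpha$ is clopen, hence closed, it follows that $U$ is closed; being also open, $U$ is clopen, and therefore so is $X \setminus U$.

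Finally I would absorb the leftover into one of the pieces: replacing $V_0$ by $V_0 \cup (X \setminus U)$ --- a union of two disjoint clopen sets, hence clopen and non-empty --- gives a family $\{V_0 \cup (X \setminus U)\} \cup \{V_\alpha : 0 < \alpha < \kappa\}$ of pairwise disjoint non-empty clopen sets whose union is $X$. As these are indexed by $\kappa$ and are non-empty and pairwise disjoint, they constitute a partition of $X$ into exactly $|D|$ many clopen sets, as desired.
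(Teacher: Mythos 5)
Your proposal is correct and follows essentially the same route as the paper: you choose clopen $\pi$-base elements inside the balls $\Ball(d,\varepsilon/4)$, show their union $U$ is closed via the same four-term triangle-inequality estimate with the same radius $\varepsilon/4$, and absorb the clopen remainder $X\setminus U$ into one piece (a step the paper leaves implicit). The only cosmetic difference is that you verify closedness by noting the family is discrete and that a discrete family of closed sets has closed union, whereas the paper argues by contradiction with a convergent sequence; the underlying computation is identical.
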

\begin{proof}
Let $\di$ denote the metric on $X$. Fix an infinite $\varepsilon$-dispersed subset $D$ of $X$, where $\varepsilon >0$. We will use $\cl$ to denote closure in $X$. For every $d\in D$, fix a non-empty clopen subset $U_d$ of $X$ such that $U_d\subseteq\Ball(d,\varepsilon/4)$. It is clear that $U_d\cap U_e=\varnothing$ whenever $d,e\in D$ and $d\neq e$. Therefore, to conclude the proof, it will be enough to show that $U$ is closed, where $U=\bigcup_{d\in D}U_d$.

Assume, in order to get a contradiction, that $x_n\in U$ for $n\in\omega$ and $x_n\to x$, but $x\notin U$. Pick $N\in\omega$ such that $\di(x_n,x)<\varepsilon/4$ whenever $n\geq N$. If there existed $d\in D$ such that $x_n\in U_d$ for every $n\geq N$, then we would have $x\in\cl(U_d)=U_d$, contradicting the assumption that $x\notin U$. So we can fix distinct $d,e\in D$ and $m,n\geq N$ such that $x_m\in U_d$ and $x_n\in U_e$. Then
$$
\di(d,e)\leq\di(d,x_m)+\di(x_m,x)+\di(x,x_n)+\di(x_n,e)<4(\varepsilon/4)=\varepsilon,
$$
which contradicts the fact that $D$ is $\varepsilon$-dispersed.
\end{proof}

\begin{lemma}\label{lemma_uncountable_cofinality}
Let $X$ be a metrizable space, and let $\kappa$ be a cardinal of uncountable cofinality. Assume that $X$ has a $\pi$-base consisting of clopen sets. If $\kappa\leq\weight(X)$ then $X$ can be partitioned into $\kappa$ many clopen sets.
\end{lemma}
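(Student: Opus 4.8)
The plan is to reduce the statement to Lemma \ref{lemma_epsilon_dispersed} by producing, for some $\varepsilon>0$, an $\varepsilon$-dispersed subset of $X$ of cardinality at least $\kappa$. Fix a metric $\di$ inducing the topology of $X$. For each integer $n\geq 1$, use Zorn's Lemma to choose a maximal $(1/n)$-dispersed subset $D_n$ of $X$. By maximality, every point of $X$ lies at distance less than $1/n$ from some point of $D_n$, so $D=\bigcup_{n\geq 1}D_n$ is dense in $X$. Now recall that a metrizable space has density equal to its weight \cite[Theorem 4.1.15]{engelking}, so the density of $X$ is at least $\kappa$; since $D$ is dense, this gives $|D|\geq\kappa$.

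Here is where the hypothesis on the cofinality of $\kappa$ enters, and this is the crux of the argument. Because $D$ is a countable union, the sets $D_n$ cannot all have cardinality smaller than $\kappa$: otherwise $\sup_{n}|D_n|<\kappa$ (a countable supremum of cardinals below $\kappa$ stays below $\kappa$, as $\mathrm{cf}(\kappa)>\omega$), and therefore $|D|\leq\aleph_0\cdot\sup_n|D_n|<\kappa$, a contradiction. So we may fix $n\geq 1$ with $|D_n|\geq\kappa$. Then $D_n$ is an infinite $(1/n)$-dispersed subset of $X$, and Lemma \ref{lemma_epsilon_dispersed} yields a partition $\mathcal{U}$ of $X$ into $|D_n|\geq\kappa$ many clopen sets.

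It remains to coarsen $\mathcal{U}$ to a partition into exactly $\kappa$ clopen sets. Since $|\mathcal{U}|\geq\kappa$, partition the index set of $\mathcal{U}$ into $\kappa$ many non-empty pieces, and let each new set be the union of the corresponding members of $\mathcal{U}$. Each such union $V$ is open, and its complement is again a union of members of $\mathcal{U}$, hence open as well; thus $V$ is clopen. This produces the desired partition of $X$ into $\kappa$ many clopen sets. The only genuinely delicate point is the cardinal arithmetic of the second paragraph, where the uncountable cofinality of $\kappa$ is indispensable (and where the case $\mathrm{cf}(\kappa)=\omega$ would break down); every other step is routine.
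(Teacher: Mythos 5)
Your proposal is correct and takes essentially the same route as the paper: maximal dispersed sets obtained via Zorn's Lemma, the uncountable-cofinality computation to extract a single $D_n$ with $|D_n|\geq\kappa$, and then an application of Lemma \ref{lemma_epsilon_dispersed}. The only cosmetic differences are that you get $|D_n|\geq\kappa$ by citing density $=$ weight for metrizable spaces, whereas the paper checks directly that the balls centered at the points of the sets $D_n$ form a base, and that you spell out the coarsening to exactly $\kappa$ clopen pieces, a point the paper leaves implicit.
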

\begin{proof}
Assume that $\kappa\leq\weight(X)$. Let $\di$ be a metric on $X$. By Zorn's Lemma, for every $n\in\omega$ we can fix a maximal $2^{-n}$-dispersed subset $D_n$ of $X$. It is straightforward to check that
$$
\BB=\bigcup_{n\in\omega}\{\Ball(d,2^{-n}):d\in D_n\}
$$
is a base for $X$. Assume, in order to get a contradiction, that $|D_n|<\kappa$ for each $n$. Since $\kappa$ has uncountable cofinality, it follows that
$$
|\BB|\leq\sum_{n\in\omega}|D_n|=\supi\{|D_n|:n\in\omega\}<\kappa\leq\weight(X),
$$
where the equality holds by \cite[Lemma 5.8]{jech} and the fact that at least one $D_n$ is infinite (otherwise $X$ would be separable). This is clearly a contradiction, hence $|D_n|\geq\kappa$ for some $n$. An application of Lemma \ref{lemma_epsilon_dispersed} concludes the proof.
\end{proof}

The following lemma first appeared (without proof) as \cite[Lemma 3]{medini_products}. The proof given here is taken almost verbatim from \cite[Lemma 3.3]{medini_thesis}. According to \cite{engelking}, a space $X$ is \emph{pseudocompact} if it is Tychonoff and every continuous function $f:X\longrightarrow\RRR$ is bounded. However, being Tychonoff is irrelevant to Lemma \ref{lemma_pseudocompact}, so we state it more directly as follows. Also recall that a metrizable space is pseudocompact iff it is compact (see \cite[Theorem 4.1.17]{engelking} and the subsequent remark).

\enlargethispage{\baselineskip} 

\begin{lemma}\label{lemma_pseudocompact}
Let $X$ be a space. Assume that $X$ has a $\pi$-base $\BB$ consisting of clopen sets, and that there exists an unbounded continuous function $f:X\longrightarrow\RRR$. Then $X$ can be partitioned into infinitely many clopen sets.
\end{lemma}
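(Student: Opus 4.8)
The plan is to exploit the unboundedness of $f$ to manufacture infinitely many pairwise disjoint non-empty clopen sets that are ``separated'' by the values of $f$, and then to observe that the union of such a family is automatically closed. This last observation is what produces the remaining clopen piece needed to turn the family into an actual partition of $X$. The role of the $\pi$-base is to convert the open sets $f^{-1}(\text{interval})$, which are typically not clopen, into genuine clopen sets sitting inside them.

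First I would reduce to the case in which $f$ is unbounded above, replacing $f$ by $-f$ if necessary. Proceeding recursively, I would then choose points $x_n\in X$ for $n\in\omega$ so that the values $t_n=f(x_n)$ satisfy $t_n>t_{n-1}+1$; this is possible precisely because $f$ is unbounded above, and it guarantees both that $|t_n-t_m|>1$ whenever $n\neq m$ and that $t_n\to\infty$. Setting $I_n=(t_n-1/2,\,t_n+1/2)$, the open intervals $I_n$ are pairwise disjoint, hence the open sets $W_n=f^{-1}(I_n)$ are pairwise disjoint non-empty open subsets of $X$, with $x_n\in W_n$. Since $\BB$ is a $\pi$-base consisting of clopen sets, for each $n$ I can fix $C_n\in\BB$ with $C_n\subseteq W_n$. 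Each $C_n$ is then a non-empty clopen set, and the $C_n$ are pairwise disjoint because the $W_n$ are.

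The key step, which I expect to be the main obstacle since for a general (not necessarily sequential) space one cannot argue with convergent sequences, is to show that $C=\bigcup_{n\geq 1}C_n$ is closed. Let $x\in\cl(C)$ and put $r=f(x)$. Because $t_n\to\infty$, there is $N$ such that $t_n-1/2>r+1$ for all $n\geq N$, so that $I_n\subseteq(r+1,\infty)$ and hence $W_n\cap f^{-1}((-\infty,r+1))=\varnothing$ for every $n\geq N$. Now $f^{-1}((-\infty,r+1))$ is an open neighborhood of $x$ that is disjoint from $\bigcup_{n\geq N}C_n$, so $x\notin\cl(\bigcup_{n\geq N}C_n)$. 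Writing $C=\bigcup_{1\leq n<N}C_n\cup\bigcup_{n\geq N}C_n$ and passing to closures, we conclude that $x\in\cl(\bigcup_{1\leq n<N}C_n)=\bigcup_{1\leq n<N}C_n\subseteq C$, the equality holding because a finite union of closed sets is closed. Thus $C$ is closed, and being a union of clopen sets it is open as well, so $C$ is clopen.

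Finally I would assemble the partition by setting $P_0=X\setminus C$ and $P_n=C_n$ for $n\geq 1$. Each $P_n$ is clopen, they are pairwise disjoint, and their union is all of $X$. Moreover $C_0$ is disjoint from every $C_n$ with $n\geq 1$, hence $\varnothing\neq C_0\subseteq P_0$, so $P_0$ is non-empty; and each $P_n=C_n$ with $n\geq 1$ is non-empty by construction. Therefore $\{P_n:n\in\omega\}$ is a partition of $X$ into infinitely many clopen sets, as required.
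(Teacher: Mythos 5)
Your proof is correct and follows essentially the same strategy as the paper's: pull back a discrete family of pairwise disjoint intervals along $f$, use the $\pi$-base to place non-empty clopen sets inside the preimages, show that their union is closed by a neighborhood argument, and complete the partition with the complement. Your choice of values $t_n$ increasing monotonically to $+\infty$ (after replacing $f$ by $-f$ if necessary) streamlines the paper's auxiliary step that $V=\bigcup_{n\in\omega}\cl(V_n)$ is closed in $\RRR$ together with its two-case analysis, and reserving $C_0$ inside $P_0=X\setminus C$ makes the non-emptiness of every piece explicit, which the paper leaves implicit.
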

\begin{proof}
Fix a metric $\di$ on $\RRR$. Throughout this proof, we will use $\cl$ to denote closure in $\RRR$. It is a simple exercise to construct $D=\{d_n:n\in\omega\}\subseteq f[X]$ and open subsets $U_n$ of $\RRR$ for $n\in\omega$ such that the following conditions are satisfied:
\begin{itemize}
\item $D$ is a closed subset of $\RRR$,
\item $d_n\in U_n$ for each $n$,
\item $U_m\cap U_n=\varnothing$ whenever $m\neq n$.
\end{itemize}
Then set $V_n=\Ball(d_n,\varepsilon_n)$ for $n\in\omega$, where the $\varepsilon_n$ are such that $0<\varepsilon_n\leq 2^{-n}$ and $\cl(V_n)\subseteq U_n$.

Next, we will show that $V=\bigcup_{n\in\omega}\cl(V_n)$ is closed in $\RRR$. Pick $x\notin V$. Choose $N\in\omega$ such that $2^{-N}<\di(x,D)$, then set $W=\Ball(x,2^{-(N+1)})$. We claim that $W\cap V_n=\varnothing$ for every $n\geq N+1$. Otherwise, for an element $z$ of such an intersection, we would have
$$
\di(x,d_n)\leq\di(x,z)+\di(z,d_n)\leq 2^{-(N+1)}+2^{-(N+1)}=2^{-N}<\di(x,D),
$$
which is a contradiction. So $W\setminus(\cl(V_0)\cup\cdots\cup\cl(V_N))$ is an open neighborhood of $x$ that is disjoint from $V$.

Finally, fix $B_n\in\BB$ for $n\in\omega$ so that each $B_n\subseteq f^{-1}[V_n]$. To conclude the proof, we will show that $B=\bigcup_{n\in\omega}B_n$ is closed. Pick $x\notin B$. If $x\in f^{-1}[U_n]$ for some $n\in\omega$, then $f^{-1}[U_n]\setminus B_n$ is an open neighborhood of $x$ that is disjoint from $B$. Now assume that $x\notin\bigcup_{n\in\omega} f^{-1}[U_n]$. Then $y=f(x)\notin V$, so we can find an open neighborhood $W$ of $y$ that is disjoint from $V$. It is clear that $f^{-1}[W]$ is an open neighborhood of $x$ that is disjoint from $B$.
\end{proof}

Recall that a space $X$ is \emph{weight-homogeneous} if $\weight(U)=\weight(X)$ for every non-empty open subspace $U$ of $X$. Naturally, in the context of this article, the only relevant examples of weight-homogeneous spaces are the infinite powers.

\begin{theorem}\label{theorem_partition}
Let $X$ be a metrizable space. Assume that $X$ is non-compact, weight-homogeneous, and has a $\pi$-base consisting of clopen sets. Then $X$ can be partitioned into $\weight(X)$ many clopen sets.
\end{theorem}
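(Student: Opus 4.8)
Set $\kappa=\weight(X)$. The plan is to split into cases according to the cofinality of $\kappa$. If $\kappa$ has uncountable cofinality, then Lemma \ref{lemma_uncountable_cofinality} applies directly (with $\kappa=\weight(X)$), and we are done. So the substance of the argument lies in the case $\operatorname{cf}(\kappa)=\omega$, where Lemma \ref{lemma_uncountable_cofinality} cannot be invoked with $\kappa$ itself. Here the first thing I would do is break $X$ into countably many clopen pieces using non-compactness. Since $X$ is metrizable and non-compact, it is not pseudocompact (recall that a metrizable space is pseudocompact iff it is compact), so there exists an unbounded continuous function $f:X\longrightarrow\RRR$. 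By Lemma \ref{lemma_pseudocompact} (whose proof in fact produces a countable partition), we may fix a partition $\{C_n:n\in\omega\}$ of $X$ into infinitely many non-empty clopen sets. If $\kappa=\omega$ this partition already has the desired size and we are finished, so the genuinely delicate subcase is $\kappa>\omega$ with $\operatorname{cf}(\kappa)=\omega$.

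To handle that subcase I would refine each piece $C_n$ separately, exploiting weight-homogeneity. Using $\operatorname{cf}(\kappa)=\omega$, fix regular uncountable cardinals $\lambda_n$ (for instance $\lambda_n=\kappa_n^+$, where $\kappa_n$ is a strictly increasing sequence cofinal in $\kappa$) with $\lambda_n\leq\kappa$ for every $n$ and $\sup_{n\in\omega}\lambda_n=\kappa$. For each $n$, the set $C_n$ is a non-empty open subspace of $X$, so weight-homogeneity gives $\weight(C_n)=\weight(X)=\kappa$; moreover $C_n$ is metrizable, and since $C_n$ is clopen in $X$, the members of the given $\pi$-base that are contained in $C_n$ form a $\pi$-base for $C_n$ consisting of clopen sets. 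As $\lambda_n$ has uncountable cofinality and $\lambda_n\leq\weight(C_n)$, Lemma \ref{lemma_uncountable_cofinality} applied to $C_n$ partitions it into $\lambda_n$ many sets that are clopen in $C_n$, hence (as $C_n$ is clopen in $X$) clopen in $X$. Taking the union of these partitions over all $n$ yields a partition of $X$ into $\sum_{n\in\omega}\lambda_n=\kappa$ many clopen sets, as required.

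The main obstacle is precisely this singular case $\operatorname{cf}(\kappa)=\omega$ with $\kappa>\omega$, and the idea that resolves it is to produce countably many ``slots'' (the clopen pieces $C_n$, obtained from non-compactness via Lemma \ref{lemma_pseudocompact}) and then to fill slot $n$ with $\lambda_n$ clopen pieces along a cofinal sequence of regular cardinals below $\kappa$. Weight-homogeneity is the essential hypothesis that makes this work: it guarantees that every clopen piece $C_n$ still carries the full weight $\kappa$, so that Lemma \ref{lemma_uncountable_cofinality} can supply as many as $\lambda_n$ sub-pieces from each; without it a clopen piece might have smaller weight and the refinement could fall short. The only remaining verifications are routine: the cardinal-arithmetic identity $\sum_{n\in\omega}\lambda_n=\sup_{n\in\omega}\lambda_n=\kappa$, and the elementary fact that a clopen subset of a clopen subspace is clopen in the whole space.
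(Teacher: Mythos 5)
Your proof is correct and follows essentially the same route as the paper: uncountable cofinality is dispatched by Lemma \ref{lemma_uncountable_cofinality}, and the singular case $\operatorname{cf}(\kappa)=\omega$ is handled by using non-compactness via Lemma \ref{lemma_pseudocompact} to split $X$ into countably many clopen pieces, each of which retains weight $\kappa$ by weight-homogeneity and is then partitioned by Lemma \ref{lemma_uncountable_cofinality} along a cofinal sequence of cardinals of uncountable cofinality. Your choice of successors $\kappa_n^+$ is just a concrete instance of the paper's cardinals of uncountable cofinality, so there is no substantive difference.
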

\begin{proof}
Set $\kappa=\weight(X)$. If $\kappa=\omega$, the desired conclusion follows from Lemma \ref{lemma_pseudocompact}. On the other hand, if $\kappa$ has uncountable cofinality, the desired conclusion follows from Lemma \ref{lemma_uncountable_cofinality}. So assume that $\kappa$ is uncountable but has countable cofinality, and let $\kappa_n$ for $n\in\omega$ be cardinals of uncountable cofinality such that $\supi\{\kappa_n:n\in\omega\}=\kappa$.

Since $X$ is non-compact, by Lemma \ref{lemma_pseudocompact} we can fix non-empty clopen subsets $X_n$ of $X$ for $n\in\omega$ such that $\bigcup_{n\in\omega}X_n=X$ and $X_m\cap X_n=\varnothing$ whenever $m\neq n$. Notice that each $\weight(X_n)=\kappa\geq\kappa_n$ by weight-homogeneity. Hence each $X_n$ can be partitioned into $\kappa_n$ many clopen sets by Lemma \ref{lemma_uncountable_cofinality}. To conclude the proof, simply consider the union of these partitions.
\end{proof}

It is clear from the above proof that the assumption of weight-homogeneity is only used in the case when $\weight(X)$ is uncountable of countable cofinality. Of course, it would be nice to eliminate it altogether.
\begin{question}
Is it possible to drop the the assumption of weight-homogeneity in Theorem \ref{theorem_partition}?
\end{question}

\section{The main result}

As we mentioned in the introduction, the following result shows that the assumption of strong zero-dimensionality in \cite[Corollary 29]{medini_products} can be weakened to mere zero-dimensionality. Recall that a space $X$ is \emph{strongly divisible} by $2$ if $X\times 2$ is homeomorphic to $X$, where $2$ is the discrete space with two elements.
\begin{theorem}\label{theorem_main}
Let $X$ be a non-separable zero-dimensional metrizable space, and let $\lambda$ be an infinite cardinal. Then $X^\lambda$ is strongly homogeneous.	
\end{theorem}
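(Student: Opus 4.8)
The plan is to single out the countable power as the hard core and bootstrap the general case from it. Write $Y=X^{\omega}$, and note that $X^{\lambda}\cong(X^{\omega})^{\lambda}=Y^{\lambda}$ because $\omega\cdot\lambda=\lambda$. Since $X$ is non-separable it is non-compact and $\kappa:=\weight(X)\geq\omega_{1}$, so $Y$ is a non-separable, non-compact, zero-dimensional \emph{metrizable} space with $Y\cong Y^{\omega}$, while $Y^{\lambda}$ is weight-homogeneous of weight $\mu:=\max(\kappa,\lambda)$. Everything reduces to the following \emph{base case}, which I expect to be the only place where Theorem~\ref{theorem_partition} is genuinely needed: \emph{$Y=X^{\omega}$ is strongly homogeneous}.

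Granting the base case, I would finish as follows. By Lemma~\ref{lemma_pseudocompact} the non-compact metrizable space $Y$ partitions into infinitely many non-empty clopen pieces; regrouping into countably many and using strong homogeneity of $Y$ to identify each with $Y$, we get $Y\cong Y\times\omega$ (a discrete sum of countably many copies). Pulling one coordinate out of $Y^{\lambda}\cong Y\times Y^{\lambda}$ then gives $Y^{\lambda}\cong(Y\times\omega)\times Y^{\lambda}\cong Y^{\lambda}\times\omega$. Moreover every non-empty clopen $C\subseteq Y^{\lambda}$ contains a basic clopen box $\prod_{i\in F}U_{i}\times Y^{\lambda\setminus F}$ with each $U_{i}$ clopen in $Y$; by the base case $U_{i}\cong Y$, so the box is homeomorphic to $Y^{|F|}\times Y^{\lambda\setminus F}\cong Y^{\lambda}$. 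Thus $Y^{\lambda}$ embeds as a clopen subspace of $C$, and $C$ embeds (trivially) as a clopen subspace of $Y^{\lambda}$. I would now invoke the topological Cantor--Schr\"oder--Bernstein principle for clopen embeddings in the form \emph{if $A$ and $B$ each embed as a clopen subspace of the other and $B\cong B\times\omega$ then $A\cong B$} (a short Eilenberg swindle that needs only divisibility of the target). With $A=C$ and $B=Y^{\lambda}$ this yields $C\cong Y^{\lambda}$, so $Y^{\lambda}=X^{\lambda}$ is strongly homogeneous.

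It remains to prove the base case, where $Y=X^{\omega}$ is metrizable and I must show that every non-empty clopen $C\subseteq Y$ is homeomorphic to $Y$. I would build such a homeomorphism directly by a Knaster--Reichbach-style back-and-forth fusion, and this is where Theorem~\ref{theorem_partition} is the engine. Being clopen in the weight-homogeneous $Y$, the set $C$ is itself non-compact (a compact clopen piece would be separable, contradicting weight-homogeneity), metrizable, weight-homogeneous, with a clopen $\pi$-base and of weight $\kappa$; hence it partitions into $\kappa$ clopen sets. Furthermore, running the construction of Lemma~\ref{lemma_epsilon_dispersed} inside $\varepsilon/4$-balls, these pieces can be taken of arbitrarily small diameter. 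Recursively refining matched clopen partitions of $C$ and of $Y$ into $\kappa$ small-diameter clopen pieces---all of weight $\kappa$ by weight-homogeneity, hence freely matchable by a bijection---one reads off a candidate homeomorphism $C\to Y$ in the limit.

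The base case is where I expect essentially all the difficulty to lie, and within it the crucial point is that $X$, hence $Y$, need not be completely metrizable. When the space is complete the fusion above converges automatically; in general a nested sequence of clopen pieces of vanishing diameter may have empty intersection, so the limiting correspondence need not be a well-defined bijection, and one must synchronize the two metrics so that the matching respects which addresses actually converge to points. This is precisely the step that the earlier argument handled using strong zero-dimensionality; the payoff of Section~2 is that Theorem~\ref{theorem_partition}, and with it the small-diameter clopen partitions feeding the fusion, requires only a $\pi$-base of clopen sets. Making the non-complete bookkeeping work under mere zero-dimensionality is, I expect, the heart of the matter.
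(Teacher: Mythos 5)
Your reduction to the base case is sound: the clopen Cantor--Schr\"oder--Bernstein swindle you invoke is correct (writing $\oplus$ for disjoint topological sum, if $A\cong B\oplus C$, $B\cong A\oplus D$ and $B\cong B\times\omega$, then $B\cong(B\times\omega)\oplus(C\times\omega)\oplus(D\times\omega)$, hence $A\cong B\oplus C\cong(B\times\omega)\oplus(C\times\omega\oplus C)\oplus(D\times\omega)\cong B$), and it in effect reproves, in the special case needed here, the productivity of strong homogeneity that the paper simply cites as \cite[Corollary 14]{medini_products}. The genuine gap is the base case itself, namely that $X^\omega$ is strongly homogeneous---which is the entire content of the theorem---and there your proposal stops at a sketch that you yourself flag as incomplete. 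Matching clopen partitions that are merely ``all of weight $\kappa$, hence freely matchable by a bijection'' does not produce a homeomorphism: in the limit of a Knaster--Reichbach fusion one needs the convergent branches (nested pieces with non-empty intersection) on the two sides to correspond, and without complete metrizability there is no mechanism to synchronize them---a nested sequence of clopen sets of vanishing diameter can have empty intersection in $C$ but non-empty intersection in $Y$. This is exactly the obstruction that made even plain homogeneity of $X^\omega$ hard and forced Dow and Pearl to resort to elementary submodels, so the ``heart of the matter,'' as you call it, cannot be waved through; as it stands, the proposal does not prove the theorem.

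The paper avoids fusion altogether, and the two ideas you are missing are: quote \cite{dow_pearl} rather than try to outdo it, and aim for a clopen subspace strongly divisible by $2$ rather than for a direct homeomorphism. Concretely: since $X$ is non-separable, $\weight(X^\omega)$ is uncountable, so Theorem \ref{theorem_partition} partitions $X^\omega$ into clopen sets $X_\alpha$ for $\alpha\in\kappa$ with $\kappa$ uncountable. Fix a point $x$ and a countable clopen local base $\{U_n:n\in\omega\}$ at $x$; by the homogeneity of $X^\omega$ (this is where \cite{dow_pearl} enters, and your proposal never uses it) each $X_\alpha$ contains a clopen copy $V_\alpha$ of some $U_{n(\alpha)}$. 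Since $\kappa$ is uncountable while there are only countably many values of $n$, a pigeonhole argument gives an uncountable $I\subseteq\kappa$ with $n(\alpha)=n$ constant on $I$; then $V=\bigcup_{\alpha\in I}V_\alpha$ is a non-empty clopen subspace homeomorphic to $U_n\times I$ with $I$ discrete and infinite, hence strongly divisible by $2$, and \cite[Proposition 24]{medini_products} concludes that $X^\omega$ is strongly homogeneous. Note that the uncountability supplied by Theorem \ref{theorem_partition} is used only for this pigeonhole step (with $\kappa=\omega$ it would fail, which is where non-separability is essential), not as an engine for a back-and-forth construction as in your sketch.
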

\begin{proof}
Since strong homogeneity is productive in the zero-dimensional realm (see \cite[Corollary 14]{medini_products}) and $X^\lambda$ is homeomorphic to $(X^\omega)^\lambda$, it will be enough to show that $X^\omega$ is strongly homogeneous. By Theorem \ref{theorem_partition}, we can fix an uncountable 	cardinal $\kappa$ and non-empty clopen subsets $X_\alpha$ of $X^\omega$ for $\alpha\in\kappa$ such that $\bigcup_{\alpha\in\kappa}X_\alpha=X^\omega$ and $X_\alpha\cap X_\beta=\varnothing$ whenever $\alpha\neq\beta$. Pick $x\in X^\omega$ and a local base $\{U_n:n\in\omega\}$ for $X^\omega$ at $x$ consisting of clopen sets. Since $X^\omega$ is homogeneous by \cite{dow_pearl}, for every $\alpha\in\kappa$ there exist $n(\alpha)\in\omega$ and a clopen subspace $V_\alpha$ of $X^\omega$ such that $V_\alpha\subseteq X_\alpha$ and $V_\alpha$ is homeomorphic to $U_{n(\alpha)}$. Since $\kappa$ is uncountable, there must be an uncountable $I\subseteq\kappa$ and $n\in\omega$ such that $n(\alpha)=n$ for all $\alpha\in I$. Set $V=\bigcup_{\alpha\in I}V_\alpha$, and observe that $V$ is a non-empty clopen subspace of $X^\omega$ that is strongly divisible by $2$. The desired conclusion then follows from \cite[Proposition 24]{medini_products}.
\end{proof}

We conclude by observing that there might be a more systematic way of proving Theorem \ref{theorem_main}. The following result is \cite[Theorem 5]{medvedev_closed} (see also \cite[Theorem 6]{medvedev_homogeneous}).\footnote{\,At the very beginning of \cite{medvedev_closed}, Medvedev assumes that all spaces are metrizable. Furthermore, it is well-known that $\mathsf{Ind}(X)=\mathsf{dim}(X)$ for every metrizable space $X$ (see \cite[Theorem 7.3.2]{engelking}). Regarding \cite[Theorem 6]{medvedev_homogeneous}, although the assumption $\mathsf{ind}(X)=0$ appears in its statement, we remark that the stronger assumption $\mathsf{dim}(X)=0$ is in fact used.}
\begin{theorem}[Medvedev]\label{theorem_medvedev}
Let $X$ be a strongly zero-dimensional metrizable space. Assume that $\weight(X)$ has uncountable cofinality and that $X$ is weight-homogeneous. If $X$ is homogeneous then $X$ is strongly homogeneous.
\end{theorem}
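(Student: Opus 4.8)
The plan is to run the argument of Theorem~\ref{theorem_main} with $X$ in place of $X^\omega$. Write $\kappa=\weight(X)$. Since a strongly zero-dimensional metrizable space is zero-dimensional, $X$ has a base, hence a $\pi$-base, consisting of clopen sets, and since $X$ is metrizable each point has a countable local base of clopen sets. First I would note that $X$ is non-compact: uncountable cofinality forces $\kappa\geq\omega_1$, so $X$ is non-separable, whereas a compact metrizable space is separable. As $\kappa$ has uncountable cofinality, Lemma~\ref{lemma_uncountable_cofinality} applies with this very $\kappa$ (in this case weight-homogeneity is not needed, as remarked after Theorem~\ref{theorem_partition}) and produces a partition of $X$ into $\kappa$ many non-empty clopen sets $\{X_\alpha:\alpha\in\kappa\}$.

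Next I would exploit homogeneity to build a clopen subspace that is strongly divisible by $2$. Fix $x\in X$ and a local base $\{U_n:n\in\omega\}$ at $x$ consisting of clopen sets. For each $\alpha\in\kappa$ choose $p_\alpha\in X_\alpha$ and, using homogeneity, a homeomorphism $h_\alpha:X\longrightarrow X$ with $h_\alpha(x)=p_\alpha$; then $\{h_\alpha[U_n]:n\in\omega\}$ is a clopen local base at $p_\alpha$, so there is $n(\alpha)\in\omega$ with $V_\alpha:=h_\alpha[U_{n(\alpha)}]\subseteq X_\alpha$. Each $V_\alpha$ is clopen and homeomorphic to $U_{n(\alpha)}$. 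Because $\kappa$ is uncountable and $\alpha\mapsto n(\alpha)$ takes only countably many values, some fibre of this map is uncountable; fix such a fibre $I\subseteq\kappa$ and the corresponding value $n\in\omega$, so that $n(\alpha)=n$ for every $\alpha\in I$. Set $V=\bigcup_{\alpha\in I}V_\alpha$.

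I would then check that $V$ is clopen and strongly divisible by $2$. It is open as a union of open sets; it is closed because any point outside it lies either in some $X_\beta$ with $\beta\notin I$ or in some $X_\alpha\setminus V_\alpha$ with $\alpha\in I$, and each of these clopen sets is a neighbourhood disjoint from $V$ (this is where the disjointness of the partition keeps the infinite union closed). Moreover the $V_\alpha$ with $\alpha\in I$ are pairwise disjoint and clopen in $V$ and each is homeomorphic to the fixed set $U_n$, so $V$ is homeomorphic to the topological sum of $|I|$ copies of $U_n$; since $|I|$ is infinite, $V\times 2$ is homeomorphic to $V$. Finally, as $X$ is homogeneous and has the non-empty clopen subspace $V$ that is strongly divisible by $2$, I would conclude via \cite[Proposition~24]{medini_products} that $X$ is strongly homogeneous.

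The step I expect to be the main obstacle is exactly the one packaged in \cite[Proposition~24]{medini_products}, namely the passage from a single clopen subspace that is strongly divisible by $2$ to strong homogeneity of the whole space; this is a Cantor--Schr\"oder--Bernstein/absorption argument in which homogeneity is used to transport the divisibility throughout $X$, and everything preceding it is essentially bookkeeping. I would also point out that this route leans on zero-dimensionality, metrizability, homogeneity, and the uncountable cofinality of $\weight(X)$, the last only to invoke Lemma~\ref{lemma_uncountable_cofinality}; weight-homogeneity enters, if at all, only through the partition step, so Medvedev's own proof presumably takes a more direct route in which that hypothesis plays a genuine role.
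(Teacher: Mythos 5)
Your argument cannot be correct as it stands, and the decisive evidence is in the paper itself: your proof never uses weight-homogeneity (you note this), but the remark immediately following Theorem \ref{theorem_medvedev} exhibits a counterexample showing that hypothesis cannot be dropped. Run your construction on $Z=\omega_1\times 2^\omega$, where $\omega_1$ is discrete: $Z$ is strongly zero-dimensional, metrizable, homogeneous, and $\weight(Z)=\omega_1$ has uncountable cofinality, so every step you wrote goes through --- the copies $\{\alpha\}\times 2^\omega$ already give a partition into $\omega_1$ many clopen sets, and your $V$ is indeed a non-empty clopen set with $V\times 2$ homeomorphic to $V$ (in fact $Z$ itself is strongly divisible by $2$, since $\omega_1\times 2$ is homeomorphic to $\omega_1$). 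Yet $Z$ is not strongly homogeneous: $\{0\}\times 2^\omega$ is a compact non-empty clopen subspace of the non-compact space $Z$. The gap is therefore located precisely at your final step, the appeal to \cite[Proposition 24]{medini_products}. Whatever its exact formulation, that proposition cannot be the implication ``homogeneous, zero-dimensional, first-countable, with a non-empty clopen subspace strongly divisible by $2$, hence strongly homogeneous,'' since $Z$ satisfies all of those hypotheses and refutes the conclusion. In \cite{medini_products}, and in the proof of Theorem \ref{theorem_main} here, it is invoked for spaces of the form $X^\omega$, and the underlying Cantor--Schr\"oder--Bernstein absorption argument exploits the self-similarity of infinite powers: every non-empty open subset of $X^\omega$ contains a basic clopen box homeomorphic to $C\times X^\omega$, i.e.\ a clopen copy of essentially the whole space, which is what lets the divisibility of $V$ propagate everywhere. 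A general homogeneous $X$ as in Theorem \ref{theorem_medvedev} has no such structure --- in $Z$, no clopen copy of $Z$ fits inside the compact set $\{0\}\times 2^\omega$ --- and this is exactly where weight-homogeneity must do genuine work.

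A comparison with ``the paper's own proof'' is quickly made: there is none. Theorem \ref{theorem_medvedev} is quoted from \cite[Theorem 5]{medvedev_closed}; Medvedev's argument is a direct construction in which weight-homogeneity is essential, not a transplant of the proof of Theorem \ref{theorem_main}. Indeed, the paper's closing discussion runs in the opposite direction from your plan: the author asks whether strengthened versions of Medvedev's theorem hold because they would yield a better proof of Theorem \ref{theorem_main}, not the other way around --- another signal that the main theorem's argument is not expected to specialize to Medvedev's setting. For what it is worth, everything in your proposal before the last step is correct: the deduction that $X$ is non-compact, the application of Lemma \ref{lemma_uncountable_cofinality} (where, as you rightly say, weight-homogeneity is not needed), the clopenness of $V$, and its strong divisibility by $2$ are all fine; the error is solely the unjustified generalization of \cite[Proposition 24]{medini_products} from infinite powers to arbitrary homogeneous spaces.
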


Notice that the assumption of weight-homogeneity in the above result cannot be dropped. To see this, simply consider $\kappa\times X$, where $X$ is a strongly zero-dimensional homogeneous metrizable space and $\kappa>\weight(X)$ is a cardinal of uncountable cofinality with the discrete topology. Furthermore, as $\omega\times 2^\omega$ shows, the assumption that $\weight(X)$ has uncountable cofinality cannot be altogether dropped. However, we do not know the answers to the following questions. As we hinted at above, affirmative answers to both questions would yield a better proof of Theorem \ref{theorem_main} as a by-product.
\begin{question}
Is it possible to weaken ``has uncountable cofinality'' to ``is uncountable'' in Theorem \ref{theorem_medvedev}?
\end{question}
\begin{question}
Is it possible to weaken ``strongly zero-dimensional'' to ``zero-dimensional'' in Theorem \ref{theorem_medvedev}?
\end{question}

\end{document}